\renewcommand*{\eqref}[1]{%
	\hyperref[{#1}]{\textup{\tagform@{\ref*{#1}}}}%
}
\setlist[enumerate,1]{label={\textup{(\roman*)}}}
\theoremstyle{plain}
\newtheorem{theorem}{Theorem}[section]
\newtheorem{lemma}[theorem]{Lemma}
\newtheorem{corollary}[theorem]{Corollary}
\newtheorem{proposition}[theorem]{Proposition}
\theoremstyle{definition}
\newtheorem{remark}[theorem]{Remark}
\newtheorem{definition}[theorem]{Definition}
\numberwithin{equation}{section}
\begin{document}

\title[On the sums of r.i.~quasi-Banach function spaces and amalgams]{On the sums of rearrangement-invariant~quasi-Banach function spaces and their relationship to amalgams}
\author{Dalimil Pe{\v s}a}

\address{Dalimil Pe{\v s}a, Technische Universität Chemnitz, Faculty of Mathematics, 09107 Chemnitz, Germany
	and
	Department of Mathematical Analysis, Faculty of Mathematics and Physics, Charles University, Sokolovsk\'a~83, 186~75 Praha~8, Czech Republic}
\email{dalimil.pesa@mathematik.tu-chemnitz.de}
\urladdr{0000-0001-6638-0913}

\subjclass[2020]{46E30}
\keywords{sums of spaces, rearrangement-invariant quasi-Banach function spaces, representation theorem, Wiener--Luxemburg amalgam spaces}

\thanks{This research was supported by the grant 23-04720S of the Czech Science Foundation.}

\begin{abstract}
	In this paper we consider the properties of sums of rearrangement-invariant quasi-Banach function spaces, with the focus being on rearrangement-invariance and the Fatou property. In our first main result, we show that the quasinorm of the sum is in many cases equivalent to a rearrangement-invariant quasinorm by providing a weaker version of the Luxemburg-type representation. In our second main result, we show that the sum can be in some cases characterised as a Wiener--Luxemburg amalgam of the two constituent spaces, thus providing a sufficient condition for the sum being a rearrangement-invariant quasi-Banach function space.

\end{abstract}

\date{\today}

\maketitle

\makeatletter
   \providecommand\@dotsep{2}
\makeatother
%\listoftodos\relax

\section{Introduction} \label{SectIntro}
The purpose of this paper is to examine how the sums of rearrangement-invariant quasi-Banach function spaces inherit the defining properties of this class from their parent spaces. For the sake of completeness we also provide the corresponding results for the intersection; however, those results are much simpler and thus less interesting. After introducing the setting and gathering the known or easily obtained results, we shall observe that the interesting properties are the rearrangement-invariance and the Fatou property, as neither of the two transfers easily to the sum (at least in the general case); this is the content of Section~\ref{SectIntro} at the end of which we discuss how we address those questions. The remaining two sections then contains our main results. Namely, Section~\ref{SecRI&Repre} addresses the question of rearrangement-invariance as well as that of the Luxemburg-type representation, while Section~\ref{SecAmalgams} concerns itself with the question of the Fatou property, where our result is a consequence of characterising the sums of certain pairs of spaces as the appropriate Wiener--Luxemburg amalgams of said spaces.

\subsection{The setting}
Let $\mathcal{M}(\mathcal{R}, \mu)$ denote the set of all (equivalence classes of) measurable functions on the resonant measure space $(\mathcal{R}, \mu)$ (for the definition of resonant measure spaces see e.g.~\cite[Chapter~2, Definition~2.3]{BennettSharpley88}; see also \cite[Chapter~2, Theorem~2.7]{BennettSharpley88} for a crucial characterisation of the property). A \emph{quasi-Banach function norm} is then a functional $\lVert \cdot \rVert \colon \mathcal{M}(\mathcal{R}, \mu) \rightarrow [0, \infty]$ that satisfies $\lVert \, \lvert f \rvert \, \rVert = \lVert f \rVert$ for all $f \in \mathcal{M}(\mathcal{R}, \mu)$ and its restriction to $\mathcal{M}_+(\mathcal{R}, \mu)$, the cone of non-negative measurable functions, satisfies the following axioms:
\begin{enumerate}[label=\textup{(Q\arabic*)}]
	\item \label{Q1} it is a quasinorm, in the sense that it satisfies the following three conditions:
	\begin{enumerate}[ref=(\theenumii)]
		\item \label{Q1a} it is absolutely homogeneous, i.e.~$\forall a \in \mathbb{C} \; \forall f \in \mathcal{M}_+ : \lVert af \rVert = \lvert a \rvert \lVert f \rVert$,
		\item \label{Q1b} it satisfies  $\lVert f \rVert = 0 \Leftrightarrow f = 0$ $\mu$-a.e.,
		\item \label{Q1c} there is a constant $C\geq 1$, called the modulus of concavity of $\lVert \cdot \rVert$, such that it is subadditive up to this constant, i.e.
		\begin{equation*}
			\forall f,g \in \mathcal{M}_+ : \lVert f+g \rVert \leq C(\lVert f \rVert + \lVert g \rVert).
		\end{equation*}
	\end{enumerate}
\end{enumerate}
\begin{enumerate}[label=\textup{(P\arabic*)}, series=P]
	\setcounter{enumi}{1}
	\item \label{P2} it has the \emph{lattice property}, i.e.~if some $f, g \in \mathcal{M}_+$ satisfy $f \leq g$ $\mu$-a.e., then also $\lVert f \rVert \leq \lVert g \rVert$,
	\item \label{P3} it has the \emph{Fatou property}, i.e.~if  some $f_n, f \in \mathcal{M}_+$ satisfy $f_n \uparrow f$ $\mu$-a.e., then also $\lVert f_n \rVert \uparrow \lVert f \rVert $,
	\item \label{P4} $\lVert \chi_E \rVert < \infty$ for all $E \subseteq \mathcal{R}$ satisfying $\mu(E) < \infty$,
\end{enumerate} 
A \emph{Banach function norm} is a quasi-Banach function norm that further satisfies the following two conditions:
\begin{enumerate}[label=\textup{(P\arabic*)}]
	\setcounter{enumi}{0}
	\item \label{P1} it is a norm, in the sense that it satisfies the following three conditions:
	\begin{enumerate}[ref=(\theenumii)]
		\item \label{P1a} it is absolutely homogeneous, i.e.~$\forall a \in \mathbb{C} \; \forall f \in \mathcal{M}_+ : \lVert a f \rVert = \lvert a \rvert \lVert f \rVert$,
		\item \label{P1b} it satisfies $\lVert f \rVert = 0 \Leftrightarrow f = 0$  $\mu$-a.e.,
		\item \label{P1c} it is subadditive, i.e.~$\forall f,g \in \mathcal{M}_+  :  \lVert f+g \rVert \leq \lVert f \rVert + \lVert g \rVert$,
	\end{enumerate}
	\setcounter{enumi}{4}
	\item \label{P5} for every $E \subseteq \mathcal{R}$ satisfying $\mu(E) < \infty$ there exists some finite constant $C_E$, dependent only on $E$, such that the inequality $ \int_E f \: d\mu \leq C_E \lVert f \rVert $ is true for all $f \in \mathcal{M}_+$.
\end{enumerate}
A (quasi-)Banach function norm is called \emph{rearrangement-invariant}, abbreviated r.i., if it also satisfies
\begin{enumerate}[label=\textup{(r.i.)}]
	\item \label{r.i.} $\lVert f \rVert = \lVert g \rVert$ for all $f,g \in \mathcal{M}(\mathcal{R}, \mu)$ such that $f^* = g^*$,
\end{enumerate}
where $f^*$ is the non-increasing rearrangement of $f \in \mathcal{M}(\mathcal{R}, \mu) $ (see e.g.~\cite[Chapter~2, Section~1]{BennettSharpley88}). Finally, when we have some (r.i.)~(quasi-)Banach function norm $\lVert \cdot \rVert_X$, we define the corresponding \emph{(r.i.)~(quasi-)Banach function space} $X$ as
\begin{equation*}
	X = \{f \in \mathcal{M}(\mathcal{R}, \mu); \; \lVert f \rVert_X < \infty \}.
\end{equation*}

The setting of r.i.~Banach function spaces is very classical (for an exhaustive treatment, we recommend the books \cite{BennettSharpley88}, \cite{KreinPetunin82}, and \cite{Zaanen67}). However, it fails to cover many natural function spaces, such as $L^{1, \infty}$ or $L^p$, $p<1$, to name the most obvious examples. Hence, in recent years, the more general framework of r.i.~quasi-Banach function spaces has attracted a growing interest (for overviews of the recent development and applications we recommend the papers \cite{LoristNieraeth23}, \cite{MusilovaNekvinda24}, \cite{NekvindaPesa24}, and the references therein). Let us just point out one crucial result that is very closely related to the problem at hand, i.e.~the recent extension of the Luxemburg representation theorem obtained in \cite[Theorem~3.1]{MusilovaNekvinda24} and slightly modified \cite[Proposition~3.2]{PesaRepreACqN} (see also \cite{Luxemburg67} or \cite[Chapter~2, Theorem~4.10]{BennettSharpley88} for the classical version). This theorem poses, that for every r.i.~quasi-Banach function norm $\lVert \cdot \rVert_X$ over $(\mathcal{R}, \mu)$ there is an r.i.~quasi-Banach function norm $\lVert \cdot \rVert_{\overline{X}}$ over $([0,\infty), \lambda)$ (where $\lambda$ is the classical $1$-dimensional Lebesgue measure) that satisfies for every $f \in \mathcal{M}(\mathcal{R}, \mu)$
\begin{equation} \label{EqRepre}
	\lVert f \rVert_X = \lVert f^* \rVert_{\overline{X}}.
\end{equation}
This functional is then called a \emph{representation quasinorm} and the corresponding r.i.~quasi-Banach function space $\overline{X}$ is a \emph{representation space}. It is worth noting that the representation space is not necessarily unique, whence we will from now on always use the notation $\overline{X}$ for the specific representation space constructed in \cite[Definition~3.1]{PesaRepreACqN} (which is itself heavily based on the construction performed in \cite[Proof of Theorem~3.1]{MusilovaNekvinda24}) and we will call this space \emph{the canonical representation space} of $X$. Finally, it has been observed in both \cite[Section~3]{MusilovaNekvinda24} and \cite[Section~3]{PesaRepreACqN} that when the space $X$ is an r.i.~Banach function space, then the canonical representation space coincides with the representation space obtained from the classical Luxemburg construction introduced in \cite{Luxemburg67} (see also \cite[Chapter~2, Theorem~4.10]{BennettSharpley88} for a modern presentation).

\subsection{The questions}
From their inception, r.i.~spaces have always been closely tied with interpolation. They indeed provide a natural setting, as every quasi-Banach function space over $(\mathcal{R}, \mu)$ is continuously embedded into $\mathcal{M}_0(\mathcal{R}, \mu)$, the set of (equivalence classes of) functions that are finite $\mu$-a.e.~equipped with the topology of convergence in measure on sets of finite measure (see \cite[Theorem~3.4]{NekvindaPesa24}). Thence it naturally follows that every pair of quasi-Banach function spaces over the same measure space forms a compatible couple (see e.g.~\cite[Definition~1.1]{BennettSharpley88}, \cite[Section~2.3]{BerghLofstrom76}, or \cite[Chapter~1, Definition~3.1]{KreinPetunin82}). In the light of this observation, we may provide a somewhat simpler definition of the sum and intersection of spaces:
\begin{definition}
	Let $A$ and $B$ be (r.i.)~(quasi-)Banach function spaces over the same resonant measure space $(\mathcal{R}, \mu)$. Then
	\begin{equation*}
		A \cap B = \{f \in \mathcal{M}(\mathcal{R}, \mu); \; f \in A, f \in B\}
	\end{equation*}
	equipped with the functional 
	\begin{equation*}
		\lVert \cdot \rVert_{A \cap B} = \max \{\lVert \cdot \rVert_A, \, \lVert \cdot \rVert_B\},
	\end{equation*}
	while
	\begin{equation*}
		 A + B = \{f \in \mathcal{M}(\mathcal{R}, \mu); \; \exists f_A \in A, f_B \in B: f = f_A + f_B\}
	\end{equation*}
	equipped with the functional (defined for every $f \in \mathcal{M}(\mathcal{R}, \mu)$)
	\begin{equation*}
		\lVert f \rVert_{A + B} = \inf_{\stackrel{f_A \in A, \, f_B \in B}{f = f_A + f_B}} \lVert f_A \rVert_A + \lVert f_B \rVert_B.
	\end{equation*}
\end{definition}

The question then naturally presents itself whether these two spaces inherits the properties of r.i.~quasi-Banach function spaces. As it turns out, at least some parts of the answer are rather clear:

\begin{lemma} \label{LemmIntSumBasic}
	Let $A, B$ be r.i.~quasi-Banach function spaces over a resonant measure space. Then:
	\begin{enumerate}
		\item \label{LemmIntSumBasic_i} $A \cap B$ is an r.i.~quasi-Banach function space. Furthermore, if at least one of the quasinorms $\lVert \cdot \rVert_A$ and $\lVert \cdot \rVert_B$ satisfies the property \ref{P5}, then so does $\lVert \cdot \rVert_{A \cap B}$.
		\item \label{LemmIntSumBasic_ii} $A + B$ is a quasi-Banach space. Moreover, the quasinorm $\lVert \cdot \rVert_{A+B}$ satisfies for every $f \in \mathcal{M}$ that $\lVert f \rVert_{A+B} = \lVert \, \lvert f \rvert \, \rVert_{A+B}$ and it also satisfies the axioms \ref{P2} and \ref{P4} of (quasi-)Banach function norms. The axiom \ref{P5} is satisfied if and only if both $\lVert \cdot \rVert_A$ and $\lVert \cdot \rVert_B$ satisfy it.
	\end{enumerate}
\end{lemma}

\begin{proof}
	The validity of \ref{LemmIntSumBasic_i} is easily verified directly from the definition.
	
	As for \ref{LemmIntSumBasic_ii}, that $A + B$ is a quasi-Banach space is known, see e.g.~\cite[Section~3.10]{BerghLofstrom76} or \cite{PeetreSparr72}. The arguments that the quasinorm $\lVert \cdot \rVert_{A+B}$ satisfies $\lVert f \rVert_{A+B} = \lVert \, \lvert f \rvert \, \rVert_{A+B}$ for every $f \in \mathcal{M}$, as well as the axioms \ref{P2} and \ref{P4} of (quasi-)Banach function norms, are both easy and virtually identical to those for the normed case. That it has the property \ref{P5} whenever both $\lVert \cdot \rVert_A$ and $\lVert \cdot \rVert_B$ do so is immediate, the ``only if'' part follows from \cite[Theorem~3.12]{NekvindaPesa24}.
\end{proof}

Furthermore, for the more classical setting of r.i.~Banach function spaces, the existing literature also provides the rest of the solution (although we have been unable to find a full and explicit formulation of the statement). 

\begin{theorem} \label{ThmSumIntRIBFS}
	Let $A, B$ be r.i.~Banach function spaces. Then both $A \cap B$ and $A + B$ are r.i.~Banach function spaces. Moreover,
	\begin{align*}
		(A + B)' &= A' \cap B', \\
		(A \cap B)' &= A' + B',
	\end{align*}
	with equal norms, where $X'$ is the associate space of $X$.
\end{theorem}

For the definition of associate spaces (also called Köthe duals) and the related theory in the classical setting of Banach function spaces we recommend \cite[Chapter~1]{BennettSharpley88}. There is also an extension of this theory to the more general setting of quasi-Banach function spaces, but this extension is not relevant to our topic.

\begin{proof}
	That $A \cap B$ is an r.i.~Banach function space follows immediately from the definition. It is also quite easy to verify that $(A + B)' = A' \cap B'$ with equal norms.
	
	On the other hand, it follows from \cite[Lemma~1.12]{CwikelNilsson03} (see also \cite{Lozanovskii78}) that $(A \cap B)' = A' + B'$ with equal norms. Hence, the classical Lorentz--Luxemburg theorem (see e.g.~\cite[Theorem~3]{Luxemburg55} or \cite[Chapter~1, Theorem~2.7]{BennettSharpley88}) implies $A+B = A'' + B'' = (A' \cap B')'$ with equal norms, from which it follows that $A+B$ is a Banach function space (see e.g.~\cite[Theorem~3.1]{GogatishviliSoudsky14}). Furthermore, since the r.i.~Banach function spaces $A, B$ are exact interpolation spaces between $L^1$ and $L^{\infty}$ (see \cite[Chapter~3, Theorem~2.12]{BennettSharpley88} for the precise formulation we use, or \cite[Theorem~3]{Calderon66} for the original result), it is clear that so is $A+B$. Thus, it follows from \cite[Chapter~3, Theorem~2.12]{BennettSharpley88} that $A+B$ is r.i.
\end{proof}

However, for the general case, the Fatou property \ref{P3} and rearrangement-invariance \ref{r.i.} are more complicated.

As for \ref{r.i.}, interpolation is no longer usable, as the spaces are generally not even intermediate spaces between $L^1$ and $L^{\infty}$, much less interpolation ones. Moreover, the fact that a given space may fail to include $L^1 \cap L^{\infty}$ means that the more direct method applied e.g.~in \cite[Chapter~2, §4.1]{KreinPetunin82} cannot be used either (even if we were to restrict ourselves to the case $([0,\infty), \lambda)$). We tackle this problem in Theorem~\ref{ThmSumRIqBFS}, where we show that in the case when the underlying measure space is non-atomic, the space $\overline{A} + \overline{B}$ serves as an ``almost representation'' of $A+B$, meaning that we have a representation formula analogous to \eqref{EqRepre} but with an equivalence (up to multiplicative constants) instead of equality. While this does not prove that the sum itself is rearrangement-invariant, it shows that there is an equivalent rearrangement-invariant quasinorm with nice properties. As for the remaining case of resonant measure spaces, i.e.~the completely atomic spaces where all atoms have the same measure (see e.g.~\cite[Chepter~2, Theorem~2.7]{BennettSharpley88}), we were only able to prove the analogous statement (i.e.~Theorem~\ref{ThmSumRIqBFS_atom}) under the additional assumption that both $A$ and $B$ satisfy the Hardy--Littlewood--Pólya principle. We explain this in more details below in Section~\ref{SecRI&Repre}.

Relatedly, even in the cases when it is known that the sum or intersection is rearrangement-invariant, the Luxemburg-type representation of said spaces seems to be untreated by the existing literature. We fill this gap in Proposition~\ref{PropIntRIqBFS} and Theorem~\ref{ThmSumRIBFS} where we show, respectively, that $\overline{A} \cap \overline{B}$ is the canonical representation space of $A \cap B$ and that $\overline{A} + \overline{B}$ is a representation space of $A + B$, in both cases with equal (quasi)norms and for all cases of resonant measure spaces.

As for the Fatou property \ref{P3}, the above presented approach is indirect: it shows that the norm is equal to the norm in a certain associate space, i.e.~(for $f \in \mathcal{M}(\mathcal{R}, \mu)$)
\begin{equation} \label{EqAssoc}
	\lVert f \rVert_{A+B} = \lVert f \rVert_{(A' \cap B')'} = \sup_{\substack{g \in \mathcal{M}(\mathcal{R}, \mu) \\ \lVert g \rVert_{A' \cap B'} \leq 1}} \int_{\mathcal{R}} \lvert fg \rvert \: d\mu,
\end{equation}
whence it is clear that the Fatou property \ref{P3} holds. However, it is also clear that the functional on the right-hand side of \eqref{EqAssoc} satisfies the triangle inequality with constant one (regardless of the set over which the supremum is taken). Hence, this method is unusable for the non-normable spaces. Our contribution is contained in Section~\ref{SecAmalgams}, where we provide a partial solution via the Wiener--Luxemburg amalgams. Similarly to the classical case, we do not provide a direct proof but instead show that the quasinorm $\lVert \cdot \rVert_{A+B}$ is equivalent to a certain r.i.~quasi-Banach function norm, i.e.~a functional about which we know a~priori that it has the Fatou property \ref{P3}. However, this approach requires additional assumptions on certain forms of ``embeddings'' between $A$ and $B$, as well as their rearrangement-invariance. We also assume that the underlying measure space is non-atomic and of infinite measure; however this assumption is quite natural in this context as otherwise the above mentioned ``embeddings'' between $A$ and $B$ would render the problem trivial.

\section{Rearrangement-invariance and representation}\label{SecRI&Repre}

The main result of this Section is the following representation-type theorem for the sum of r.i.~quasi-Banach function spaces over non-atomic measure spaces.

\begin{theorem} \label{ThmSumRIqBFS}
	Let $A, B$ be r.i.~quasi-Banach function spaces over a non-atomic measure space $(\mathcal{R}, \mu)$. Denote by $\lVert \cdot \rVert_{\overline{A}}$ and $\lVert \cdot \rVert_{\overline{B}}$ the corresponding canonical representation quasinorms. Then we have for every $f \in \mathcal{M}(\mathcal{R}, \mu)$ that
	\begin{equation}  \label{ThmSumRIqBFS:E0}
		\lVert f \rVert_{A+B} \approx \lVert f^* \rVert_{\overline{A} + \overline{B}}.
	\end{equation}
	Furthermore, the functional $f \mapsto \lVert f^* \rVert_{\overline{A} + \overline{B}}$ always has the properties \ref{Q1}, \ref{P2}, \ref{P4}, and \ref{r.i.}, while the validity of \ref{P5} is equivalent to the same property for $A+B$.
\end{theorem}

We recall that the relation ``$\approx$'' means that the ratio of the left-hand side and the right-hand side is bounded from both above and below by some positive and finite constants that depend on the quasinorms in question but not on the functions being measured. We will also use the symbol ``$\lesssim$'' to denote the corresponding one-directional estimate. Further, the proof requires us to work with the dilation operator $D_{\frac{1}{2}}$ which maps a given function $f(\cdot) \in \mathcal{M}([0,\infty), \lambda)$ to the function $f\left( \frac{\cdot}{2} \right)$, see e.g.~\cite[Section~3.4]{NekvindaPesa24} for details.

\begin{proof}
	Recalling Lemma~\ref{LemmIntSumBasic}, we may, without loss of generality, consider only non-negative functions $f \in \mathcal{M}(\mathcal{R}, \mu)$.
	
	Let $f^* \in \overline{A} + \overline{B}$. Then $f$ is finite $\mu$-a.e. (see~\cite[Theorem~3.4]{NekvindaPesa24}), whence
	\begin{equation*}
		\lim_{t \to \infty} f^*(t) = \alpha \in [0, \infty).
	\end{equation*}
	Put
	\begin{align*}
		E_{f,\alpha} &= \left\{ x \in \mathcal{R}; \; f(x) > \alpha \right\},
	\end{align*}
	where we consider an arbitrary representative of $f$, and also
	\begin{align*}
		f_{\alpha} &= \max\{ f - \alpha, \, 0 \}, \\
		f_{\infty} &= \min \{f, \, \alpha\}.
	\end{align*}	
	Then we clearly have that $f = f_{\alpha} + f_{\infty}$, $f_{\infty} \leq \alpha$ $\mu$-a.e., $f_{\alpha} = f_{\alpha} \chi_{E_{f,\alpha}}$ $\mu$-a.e., and
	\begin{align*}
		f_{\alpha}^* &= \max\{f^* - \alpha, \, 0\} = f^* - \alpha, &\lim_{t \to \infty} f_{\alpha}^*(t) = 0.
	\end{align*}
	
	It now follows from the Ryff's theorem (see e.g.~\cite[Chapter~2, Corollary~7.6]{BennettSharpley88} or \cite{Ryff70}) that there is a measure preserving mapping $\sigma_{f_{\alpha}}$ from the support of $f_{\alpha}$ (i.e.~$E_{f,\alpha}$) onto the support of $f_{\alpha}^*$ such that $f_{\alpha} = f_{\alpha}^* \circ \sigma_{f_{\alpha}}$ on $E_{f,\alpha}$. Let us now consider an arbitrary decomposition of $f_{\alpha}^*$ into functions in $\overline{A}, \overline{B}$, i.e.
	\begin{align} \label{ThmSumRIqBFS:E1}
		f_{\alpha}^* &= (f_{\alpha}^*)_{\overline{A}} + (f_{\alpha}^*)_{\overline{B}}, &(f_{\alpha}^*)_{\overline{A}} \in \overline{A} \text{ and } (f_{\alpha}^*)_{\overline{B}} \in \overline{B}.
	\end{align}
	We may assume without loss of generality that $(f_{\alpha}^*)_{\overline{A}}$ and $(f_{\alpha}^*)_{\overline{B}}$ are both zero $\lambda$-a.e.~outside the support of $f_{\alpha}^*$. Then clearly the functions
	\begin{align*}
		f_{\alpha,A} &= \begin{cases}
			(f_{\alpha}^*)_{\overline{A}} \circ \sigma_{f_{\alpha}} &\text{on } E_{f,\alpha}, \\
			0 &\text{otherwise;}
		\end{cases} \\
		f_{\alpha,B} &= \begin{cases}
			(f_{\alpha}^*)_{\overline{B}} \circ \sigma_{f_{\alpha}} &\text{on } E_{f,\alpha}, \\
			0 &\text{otherwise.}
		\end{cases}
	\end{align*}
	satisfy $f_{\alpha,A} \in A$, $f_{\alpha,B} \in B$, $f_{\alpha} = f_{\alpha,A} + f_{\alpha,B}$, and thus
	\begin{equation*} 
		\lVert f_{\alpha} \rVert_{A+B} \leq \lVert f_{\alpha,A} \rVert_A + \lVert f_{\alpha,B} \rVert_B = \lVert (f_{\alpha,A})^* \rVert_{\overline{A}} + \lVert (f_{\alpha,B})^* \rVert_{\overline{B}} = \lVert (f_{\alpha}^*)_{\overline{A}} \rVert_{\overline{A}} + \lVert (f_{\alpha}^*)_{\overline{B}} \rVert_{\overline{B}}.
	\end{equation*}
	In the last two steps, we use, respectively, \eqref{EqRepre} and the fact that $\sigma_{f_{\alpha}}$ is measure-preserving. By taking an infimum over the decompositions of the form \eqref{ThmSumRIqBFS:E1}, we obtain
	\begin{equation} \label{ThmSumRIqBFS:E2}
		\lVert f_{\alpha} \rVert_{A+B} \leq \lVert f_{\alpha}^* \rVert_{\overline{A}+\overline{B}}.
	\end{equation}
	
	As for $f_{\infty}$, we may assume that $\alpha >0$, as otherwise there is nothing to prove. Then clearly $\mu(\mathcal{R}) = \infty$. Since $(g_A + g_B)^* \leq D_{\frac{1}{2}}g_A^* + D_{\frac{1}{2}}g_B^*$ for every $g_A \in A$ and $g_B \in B$ (see e.g.~\cite[Chapter~2, Proposition~1.7]{BennettSharpley88}), we observe that at least one of the spaces $A, B$ must contain a function $g$ satisfying
	\begin{equation*}
		\lim_{t \to \infty} g^*(t) > 0.
	\end{equation*}
	The situation is symmetrical, so let us assume that this space is $A$. Then \cite[Corollary~3.5]{MusilovaNekvinda24} implies that $\chi_{\mathcal{R}} \in A$. We thus compute
	\begin{equation} \label{ThmSumRIqBFS:E3}
		\lVert f_{\infty} \rVert_{A+B} \leq \alpha \lVert \chi_{\mathcal{R}} \rVert_{A} \leq \frac{\lVert \chi_{\mathcal{R}} \rVert_{A}}{\lVert \chi_{[0, \infty)} \rVert_{\overline{A}+\overline{B}}} \lVert \alpha \chi_{[0, \infty)} \rVert_{\overline{A}+\overline{B}} \leq \frac{\lVert \chi_{\mathcal{R}} \rVert_{A}}{\lVert \chi_{[0, \infty)} \rVert_{\overline{A}+\overline{B}}} \lVert f^* \rVert_{\overline{A}+\overline{B}}.
	\end{equation}	
	Here, we use the property \ref{P2} of $\overline{A}+\overline{B}$ as established in Lemma~\ref{LemmIntSumBasic} and the observation that $\chi_{[0, \infty)} = \chi_{\mathcal{R}}^* \in \overline{A} \hookrightarrow \overline{A}+\overline{B}$.
	
	Putting \eqref{ThmSumRIqBFS:E2} and \eqref{ThmSumRIqBFS:E3} together, we obtain
	\begin{equation*}
		\lVert f \rVert_{A+B} \lesssim \lVert f_{\alpha} \rVert_{A+B} + \lVert f_{\infty} \rVert_{A+B} \lesssim \lVert f_{\alpha}^* \rVert_{\overline{A}+\overline{B}} + \lVert f^* \rVert_{\overline{A}+\overline{B}} \lesssim \lVert f^* \rVert_{\overline{A}+\overline{B}}.
	\end{equation*}
	
	On the other hand, let $f \in A + B$ and fix some $f_A \in A$, $f_B \in B$, such that $f = f_A + f_B$. Then $f^* \leq D_{\frac{1}{2}}f_A^* + D_{\frac{1}{2}}f_B^*$ (see e.g.~\cite[Chapter~2, Proposition~1.7]{BennettSharpley88}). Now, the dilation operator is bounded on the representation spaces $\overline{A}, \overline{B}$ (see \cite[Theorem~3.23]{NekvindaPesa24}), whence it follows that $D_{\frac{1}{2}}f_A^* \in \overline{A}$ and $D_{\frac{1}{2}}f_B^* \in \overline{B}$ and that
	\begin{equation*}
		\lVert f^* \rVert_{\overline{A}+\overline{B}} \leq \lVert D_{\frac{1}{2}}f_A^* + D_{\frac{1}{2}}f_B^* \rVert_{\overline{A}+\overline{B}} \leq \lVert D_{\frac{1}{2}}f_A^* \rVert_{\overline{A}} + \lVert D_{\frac{1}{2}}f_B^* \rVert_{\overline{B}} \lesssim \lVert f_A^* \rVert_{\overline{A}} + \lVert f_B^* \rVert_{\overline{B}} = \lVert f_A \rVert_A + \lVert f_B \rVert_B.
	\end{equation*}
	By taking the infimum over all decompositions of $f$, we obtain
	\begin{equation*}
		\lVert f^* \rVert_{\overline{A}+\overline{B}} \lesssim \lVert f \rVert_{A+B}.
	\end{equation*}
	
	As for the properties of the functional $f \mapsto \lVert f^* \rVert_{\overline{A} + \overline{B}}$, \ref{r.i.}, \ref{P2}, and \ref{P4} are rather clear. The same is true for the parts \ref{Q1a} and \ref{Q1b} of \ref{Q1}. As for \ref{Q1c}, we have already recalled that the dilation operator $D_{\frac{1}{2}}$ is bounded on the representation spaces $\overline{A}, \overline{B}$ (see \cite[Theorem~3.23]{NekvindaPesa24}) and therefore also on their sum. Hence, recalling Lemma~\ref{LemmIntSumBasic} and \cite[Chapter~2, Proposition~1.7]{BennettSharpley88}, we may perform a similar computation as above (for $f, g \in \mathcal{M}(\mathcal{R}, \mu)$):
	\begin{equation*}
		\lVert (f+g)^* \rVert_{\overline{A}+\overline{B}} \leq \lVert D_{\frac{1}{2}}(f^* + g^*) \rVert_{\overline{A}+\overline{B}} \lesssim \lVert f^* \rVert_{\overline{A}+\overline{B}} +  \lVert g^* \rVert_{\overline{A}+\overline{B}}.
	\end{equation*}
	As for \ref{P5}, the equivalence of the statements follows quite easily from the equivalence of the functionals in \eqref{ThmSumRIqBFS:E0}.
\end{proof}

When the underlying measure space is completely atomic and all atoms have the same measure (following the convention of \cite[Proof of Theorem~3.1]{MusilovaNekvinda24} we shall denote this measure $\beta \in (0, \infty)$), the situation is more problematic. The problem is that a function on $([0, \infty), \lambda)$ is the non-increasing rearrangement of some sequence on $(\mathcal{R}, \mu)$ if and only if it  is constant on the intervals $[n\beta, (n+1)\beta)$; $n \in \mathbb{N}$, $(n+1)\beta \leq \mu(\mathcal{R} )$ (and zero on $[\mu(\mathcal{R} ), \infty)$ whenever $\mu(\mathcal{R} ) < \infty$, which is easily solvable and thus not the important part at the moment). If a function does hot have this property, then what one can do is to take the integral average over said intervals, as in e.g.~\cite[Proof of Proposition~3.4]{PesaRepreACqN}; however, this approach is compatible with the construction of the canonical representation quasinorm if and only if the function is a~priori known to be non-increasing. As it happens, the functions obtained from the definition of $\overline{A} + \overline{B}$ in general do not have either of those properties. The only known method of dealing with this problem is to use the Hardy--Littlewood--Pólya relation (as in e.g.~\cite[Chapter~2, Proof of Theorem~4.10]{BennettSharpley88}), which in turn requires us to assume that the spaces $A$ and $B$ satisfy the Hardy--Littlewood--Pólya principle (see e.g.~\cite[Definitions~2.15 and 2.16]{PesaRepreACqN} which are relevant to our setting, or e.g.~\cite[Definition~3.5]{BennettSharpley88} for the classical setting). However, this is a very strong assumption, which is well illustrated by the fact that it implies directly that $\lVert \cdot \rVert_{A+B}$ is a rearrangement-invariant quasinorm (via an argument virtually identical to that used in the Proof of Theorem~\ref{ThmSumIntRIBFS}). Hence, our only contribution is the representation-type formula, but given the strength of the assumptions, it is much less interesting than Theorem~\ref{ThmSumRIqBFS}.

\begin{theorem} \label{ThmSumRIqBFS_atom}
	Let $A, B$ be r.i.~quasi-Banach function spaces over a resonant and completely atomic measure space $(\mathcal{R}, \mu)$. Assume that the Hardy--Littlewood--Pólya principle holds for both $A$ and $B$. Denote by $\lVert \cdot \rVert_{\overline{A}}$ and $\lVert \cdot \rVert_{\overline{B}}$ the corresponding canonical representation quasinorms. Then we have for every $f \in \mathcal{M}(\mathcal{R}, \mu)$ that
	\begin{equation} \label{ThmSumRIqBFS_atom:E0}
		\lVert f \rVert_{A+B} \approx \lVert f^* \rVert_{\overline{A} + \overline{B}}.
	\end{equation}
	Furthermore, the functional $f \mapsto \lVert f^* \rVert_{\overline{A} + \overline{B}}$ always has the properties \ref{Q1}, \ref{P2}, \ref{P4}, and \ref{r.i.}, while the validity of \ref{P5} is equivalent to the same property for $A+B$.
\end{theorem}

\begin{proof}
	We only need to show \eqref{ThmSumRIqBFS_atom:E0} as the properties of the functional $f \mapsto \lVert f^* \rVert_{\overline{A} + \overline{B}}$ are proved exactly the same way as in Theorem~\ref{ThmSumRIqBFS}. The general approach is similar to that in the said theorem, so we will be somewhat briefer.
	 
	First, we recall that it follows from \cite[Proposition~3.3]{PesaRepreACqN} that the canonical representation quasinorms $\lVert \cdot \rVert_{\overline{A}}$ and $\lVert \cdot \rVert_{\overline{B}}$ also satisfy the Hardy--Littlewood--Pólya principle and that Lemma~\ref{LemmIntSumBasic} implies that we only have to consider non-negative functions $f \in \mathcal{M}(\mathcal{R}, \mu)$.
	
	Let now
	\begin{align*}
		f^* &\in \overline{A} + \overline{B}, \\
		\alpha &= \lim_{t \to \infty} f^*(t) \in [0, \infty), \\
		E_{f,\alpha} &= \left\{ x \in \mathcal{R}; \; f(x) > \alpha \right\},
	\end{align*}
	where we consider an arbitrary representative of $f$,
	\begin{align*}
		f_{\alpha} &= \max\{ f - \alpha, \, 0 \}, \\
		f_{\infty} &= \min \{f, \, \alpha\}
	\end{align*}	
	and observe that $f = f_{\alpha} + f_{\infty}$, $f_{\infty} \leq \alpha$ $\mu$-a.e., $f_{\alpha} = f_{\alpha} \chi_{E_{f,\alpha}}$ $\mu$-a.e., and
	\begin{align*}
		f_{\alpha}^* &= \max\{f^* - \alpha, \, 0\} = f^* - \alpha, &\lim_{t \to \infty} f_{\alpha}^*(t) = 0.
	\end{align*}
	Thence, it is easy to construct an ordering of atoms in $E_{f, \alpha}$, denoted $e_n$, $n \in \mathbb{N}$, $n \beta < \mu(E_{f, \alpha})$ (note that in our notation $0 \in \mathbb{N}$ and $\beta \in (0,\infty)$ is the measure of each atom in $\mathcal{R}$, as above), such that $f_{\alpha}(e_n) = f_{\alpha}^*(n\beta)$. We then consider some functions $(f_{\alpha}^*)_{\overline{A}}, (f_{\alpha}^*)_{\overline{B}}$ that are as in \eqref{ThmSumRIqBFS:E1} but otherwise arbitrary. We assume (without loss of generality) that $(f_{\alpha}^*)_{\overline{A}}$ and $(f_{\alpha}^*)_{\overline{B}}$ are both zero $\lambda$-a.e.~outside the support of $f_{\alpha}^*$. Next, we consider the functions
	\begin{align*}
		\widetilde{(f_{\alpha}^*)_{\overline{A}}} &= \sum_{n=0}^{\infty} \chi_{[n \beta, (n+1) \beta)} \frac{1}{\beta} \int_{n \beta}^{(n+1)\beta} (f_{\alpha}^*)_{\overline{A}} \: d\lambda, \\
		\widetilde{(f_{\alpha}^*)_{\overline{B}}} &= \sum_{n=0}^{\infty} \chi_{[n \beta, (n+1) \beta)} \frac{1}{\beta} \int_{n \beta}^{(n+1)\beta} (f_{\alpha}^*)_{\overline{B}} \: d\lambda.
	\end{align*}
	Since $\lVert \cdot \rVert_{\overline{A}}$ and $\lVert \cdot \rVert_{\overline{B}}$ both satisfy the Hardy--Littlewood--Pólya principle, \cite[Chapter~1, Proposition~3.7]{BennettSharpley88} implies that
	\begin{align*}
		\left \lVert \widetilde{(f_{\alpha}^*)_{\overline{A}}} \right \rVert_{\overline{A}} &\leq \left \lVert (f_{\alpha}^*)_{\overline{A}} \right \rVert_{\overline{A}}, \\
		\left \lVert \widetilde{(f_{\alpha}^*)_{\overline{B}}} \right \rVert_{\overline{B}} &\leq \left \lVert (f_{\alpha}^*)_{\overline{B}} \right \rVert_{\overline{B}}.
	\end{align*}
	It follows that the functions
	\begin{align*}
		f_{\alpha,A}(x) &= \begin{cases}
			\widetilde{(f_{\alpha}^*)_{\overline{A}}}(n\beta) &\text{when } x = e_n \in E_{f,\alpha}, \\
			0 &\text{otherwise;}
		\end{cases} \\
		f_{\alpha,B}(x) &= \begin{cases}
			(f_{\alpha}^*)_{\overline{B}}(n\beta) &\text{when } x = e_n \in E_{f,\alpha}, \\
			0 &\text{otherwise.}
		\end{cases}
	\end{align*}
	satisfy $f_{\alpha,A} \in A$, $f_{\alpha,B} \in B$, $f_{\alpha} = f_{\alpha,A} + f_{\alpha,B}$, and thus
	\begin{equation*} 
		\lVert f_{\alpha} \rVert_{A+B} \leq \lVert f_{\alpha,A} \rVert_A + \lVert f_{\alpha,B} \rVert_B = \lVert (f_{\alpha,A})^* \rVert_{\overline{A}} + \lVert (f_{\alpha,B})^* \rVert_{\overline{B}} \leq \lVert (f_{\alpha}^*)_{\overline{A}} \rVert_{\overline{A}} + \lVert (f_{\alpha}^*)_{\overline{B}} \rVert_{\overline{B}}.
	\end{equation*}
	The rest of the proof of the estimate
	\begin{equation*}
		\lVert f \rVert_{A+B} \lesssim \lVert f^* \rVert_{\overline{A}+\overline{B}}
	\end{equation*}
	is then identical as in Theorem~\ref{ThmSumRIqBFS}, which is also true for the proof of the converse estimate.
\end{proof}

It remains to show that the representation formula holds with equality in the cases when the space in question is a~priori known to be rearrangement-invariant. The case of the intersection of r.i.~quasi-Banach function spaces is easier, as usual. In fact, the representation itself is rather trivial, the only somewhat interesting observation is that $\overline{A} \cap \overline{B}$ coincides with the canonical representation space of $A \cap B$.

\begin{proposition} \label{PropIntRIqBFS}
	Let $A, B$ be r.i.~quasi-Banach function spaces over a resonant measure space $(\mathcal{R}, \mu)$. Denote by $\lVert \cdot \rVert_{\overline{A}}$ and $\lVert \cdot \rVert_{\overline{B}}$ the corresponding canonical representation quasinorms. Then we have for every $f \in \mathcal{M}(\mathcal{R}, \mu)$ that
	\begin{equation} \label{PropIntRIqBFS:E1}
		\lVert f \rVert_{A \cap B} = \lVert f^* \rVert_{\overline{A} \cap \overline{B}}.
	\end{equation}
	Furthermore, $\lVert \cdot \rVert_{\overline{A} \cap \overline{B}}$ coincides with the canonical representation quasinorm of $\lVert \cdot \rVert_{A \cap B}$.
\end{proposition}

\begin{proof}
	\eqref{PropIntRIqBFS:E1} follows by a direct calculation. Adopting the notation of \cite[Definition~3.1]{PesaRepreACqN}, we observe that it holds for every $g \in \mathcal{M}([0, \infty), \lambda)$ that
	\begin{equation*}
		\begin{split}
			\lVert g \rVert_{\overline{A \cap B}} &= \lVert g^* \chi_{[0, \mu(\mathcal{R}))} \rVert_{\overline{(A \cap B)}_0} \\
			&= \left \lVert \widetilde{g} \right \rVert_{A \cap B}  \\
			&= \max \left\{ \left\lVert \widetilde{g} \right\rVert_{A}, \, \left\lVert \widetilde{g} \right\rVert_{B} \right\} \\
			&= \max \{ \lVert g^* \chi_{[0, \mu(\mathcal{R}))} \rVert_{\overline{A}_0}, \, \lVert g^* \chi_{[0, \mu(\mathcal{R}))} \rVert_{\overline{B}_0} \} \\
			&= \max \left\{ \left\lVert g \right\rVert_{\overline{A}}, \, \left\lVert g \right\rVert_{\overline{B}} \right\} \\
			&= \lVert g \rVert_{\overline{A} \cap \overline{B}}.
		\end{split}
	\end{equation*}
	Here, the function $\widetilde{g} \in \mathcal{M}(\mathcal{R}, \mu)$ is defined as
	\begin{equation*}
		\widetilde{g} = T (g^* \chi_{[0, \mu(\mathcal{R}))}),
	\end{equation*}
	where the operator $T$ is defined as in \cite[Definition~3.1]{PesaRepreACqN}. We note, that since the quasinorms $\lVert \cdot \rVert_{\overline{A}}$, $\lVert \cdot \rVert_{\overline{B}}$, and $\lVert \cdot \rVert_{\overline{A} \cap \overline{B}}$ do not depend on the particular choice of the measure-preserving mapping $\sigma$ or enumeration of atoms used in the definition of $T$, as appropriate, we may consider it to be fixed. 
\end{proof}

As for the sum of r.i.~Banach function spaces, the crucial difference from the situation in Theorems~\ref{ThmSumRIqBFS} and \ref{ThmSumRIqBFS_atom} is that
\begin{enumerate}
	\item the Hardy--Littlewood--Pólya principle holds for all r.i.~Banach function spaces (see e.g.~\cite[Chapter~2, Theorem~4.6]{BennettSharpley88}) so there is no need for different assumptions in the non-atomic and completely atomic cases,
	\item we may work with the Luxemburg construction of the representation norm (as originally presented in \cite{Luxemburg67}; see also \cite[Chapter~2, Theorem~4.10]{BennettSharpley88} for a modern presentation), which allows us to prove the representation formula with equality.
\end{enumerate}

\begin{theorem} \label{ThmSumRIBFS}
	Let $A, B$ be r.i.~Banach function spaces over a resonant measure space $(\mathcal{R}, \mu)$. Denote by $\lVert \cdot \rVert_{\overline{A}}$ and $\lVert \cdot \rVert_{\overline{B}}$ the corresponding canonical representation norms. Then we have for every $f \in \mathcal{M}(\mathcal{R}, \mu)$ that
	\begin{equation*}
		\lVert f \rVert_{A+B} = \lVert f^* \rVert_{\overline{A} + \overline{B}}.
	\end{equation*}
\end{theorem}

\begin{proof}
	Since both $A+B$ and $\overline{A} + \overline{B}$ are r.i.~Banach function spaces (see Theorem~\ref{ThmSumIntRIBFS}), we know from \cite[Section~3]{PesaRepreACqN} that their canonical representation norms are the ones constructed via the Luxemburg construction (as originally presented in \cite{Luxemburg67}; see also \cite[Chapter~2, Theorem~4.10]{BennettSharpley88} for a modern presentation), i.e.
	\begin{align*}
		\lVert f \rVert_{A+B} &= \sup_{\substack{g \in \mathcal{M}(\mathcal{R}, \mu) \\ \lVert g \rVert_{(A+B)'} \leq 1}} \int_0^{\infty} f^* g^* \: d\lambda &\text{for every } f \in  \mathcal{M}(\mathcal{R}, \mu), \\
		\lVert \psi \rVert_{\overline{A} + \overline{B}} &= \sup_{\substack{\varphi \in \mathcal{M}([0, \infty), \lambda) \\ \lVert \varphi \rVert_{\left( \overline{A} + \overline{B} \right)' } \leq 1}} \int_0^{\infty} \psi^* \varphi^* \: d\lambda &\text{for every } \psi \in \mathcal{M}([0, \infty), \lambda),
	\end{align*}
	where $X'$ is the associate space of $X$ (see the comment after Theorem~\ref{ThmSumIntRIBFS}). Using Theorem~\ref{ThmSumIntRIBFS} and \cite[Chapter~2, Theorem~4.10]{BennettSharpley88}, we compute for arbitrary $g \in \mathcal{M}(\mathcal{R}, \mu)$
	\begin{equation*}
		\lVert g \rVert_{(A+B)'} = \max \{\lVert g \rVert_{A'}, \, \lVert g \rVert_{B'}\} = \max \{\lVert g^* \rVert_{\left( \overline{A} \right)'}, \, \lVert g^* \rVert_{\left( \overline{B} \right)'}\} = \lVert g^* \rVert_{\left( \overline{A} \right)' \cap \left( \overline{B} \right)' } = \lVert g^* \rVert_{\left( \overline{A} + \overline{B} \right)' }.
	\end{equation*}
	Whence
	\begin{equation} \label{ThmSumRIBFS:E1}
		\lVert f \rVert_{A+B} = \sup_{\substack{g \in \mathcal{M}(\mathcal{R}, \mu) \\ \lVert g^* \rVert_{\left( \overline{A} + \overline{B} \right)' } \leq 1}} \int_0^{\infty} f^* g^* \: d\lambda \leq \sup_{\substack{\varphi \in \mathcal{M}([0, \infty), \lambda) \\ \lVert \varphi \rVert_{\left( \overline{A} + \overline{B} \right)' } \leq 1}} \int_0^{\infty} f^* \varphi^* \: d\lambda.
	\end{equation}
	
	Now, it is of course not true in general that every non-increasing right-continuos function $\varphi \in \mathcal{M}([0, \infty), \lambda)$ is the non-increasing rearrangement of some function in $\mathcal{M}(\mathcal{R}, \mu)$, so we do not immediately get the inverse inequality. However, we may proceed as in \cite[Chapter~2, Proof of Theorem~4.10]{BennettSharpley88}: Since $\left( \overline{A} + \overline{B} \right)'$ is an r.i.~Banach function space (see e.g.~\cite[Chapter~1, Theorem~2.2]{BennettSharpley88} together with Theorem~\ref{ThmSumIntRIBFS}, or \cite[Theorem~3.1]{GogatishviliSoudsky14}), we may assume that $\varphi$ is non-increasing. By choosing a proper representative, we may assume that it is right-continuous. We may also assume that it is supported on $[0, \mu(\mathcal{R}))$ (because $f^*$ is). If $(\mathcal{R}, \mu)$ is non-atomic, then there is a measure preserving map $\sigma  \colon \mathcal{R} \to [0, \mu(\mathcal{R}))$ (see e.g.~\cite[Lemma~3.2]{MusilovaNekvinda24}) and so we have $\varphi = (\varphi \circ \sigma)^*$. If $(\mathcal{R}, \mu)$ is completely atomic, we denote the measure of each atom $\beta \in (0, \infty)$ and consider the function $\widetilde{\varphi}$ given by
	\begin{equation*}
		\widetilde{\varphi} = \sum_{n=0}^{\infty} \chi_{[n \beta, (n+1) \beta)} \frac{1}{\beta} \int_{n \beta}^{(n+1) \beta} \varphi \: d\lambda.
	\end{equation*}
	Then
	\begin{equation*}
		\int_0^{\infty} f^* \varphi \: d\lambda = \int_0^{\infty} f^* \widetilde{\varphi} \: d\lambda,
	\end{equation*}
	because $f^*$ is constant on the intervals $[n \beta, (n+1) \beta)$, and $\lVert \widetilde{\varphi} \rVert_{\left( \overline{A} + \overline{B} \right)' } \leq \lVert \varphi \rVert_{\left( \overline{A} + \overline{B} \right)' }$, as follows from \cite[Chapter~2, Theorem~4.8]{BennettSharpley88} and the fact that $\left( \overline{A} + \overline{B} \right)'$ is an r.i.~Banach function space (see above; as in Theorem~\ref{ThmSumRIqBFS_atom} this step is an application to the Hardy--Littlewood--Pólya principle).
	
	Hence, the suprema in \eqref{ThmSumRIBFS:E1} indeed coincide and we get
	\begin{equation*}
		\lVert f \rVert_{A+B} = \sup_{\substack{g \in \mathcal{M}(\mathcal{R}, \mu) \\ \lVert g^* \rVert_{\left( \overline{A} + \overline{B} \right)' } \leq 1}} \int_0^{\infty} f^* g^* \: d\lambda = \sup_{\substack{\varphi \in \mathcal{M}([0, \infty), \lambda) \\ \lVert \varphi \rVert_{\left( \overline{A} + \overline{B} \right)' } \leq 1}} \int_0^{\infty} f^* \varphi^* \: d\lambda = \lVert f^* \rVert_{\overline{A} + \overline{B}}.
	\end{equation*}
\end{proof}

\section{The relationship to Wiener--Luxemburg amalgams} \label{SecAmalgams}

In this section we assume that the underlying measure space $(\mathcal{R}, \mu)$ is non-atomic and of infinite measure. As per \cite[Section~3]{MusilovaNekvinda24}, this assumption implies that the representation spaces defined over $([0,\infty), \lambda)$ are uniquely determined. We may thus use the representation theory developed in said paper to extend the original definition from \cite{Pesa22} (which assumes that the underlying measure space is $([0,\infty), \lambda)$) without loss of information. Those facts make non-atomic measure spaces of infinite measure the natural setting for Wiener--Luxemburg amalgams.

\begin{definition}\label{DefAmal}
	Let $(\mathcal{R}, \mu)$ be non-atomic measure space of infinite measure. Let $\lVert \cdot \rVert_A$ and $\lVert \cdot \rVert_B$ be r.i.~quasi-Banach function norms over $(\mathcal{R}, \mu)$ and denote by $\lVert \cdot \rVert_{\overline{A}}$ and $\lVert \cdot \rVert_{\overline{B}}$ the corresponding canonical representation quasinorms. The Wiener--Luxemburg amalgam space $WL(A,B)$ is defined as the space
	\begin{equation*}
		WL(A,B) = \left\{f \in \mathcal{M}(\mathcal{R}, \mu); \; \lVert f^* \chi_{[0,1]} \rVert_{\overline{A}} + \lVert f^* \chi_{(1, \infty)} \rVert_{\overline{B}} \right\},
	\end{equation*}
	equipped with the quasinorm $\lVert \cdot \rVert_{WL(A,B)}$ which is defined for every $f \in \mathcal{M}(\mathcal{R}, \mu)$ by
	\begin{equation}
		\lVert f \rVert_{WL(A,B)} = \lVert f^* \chi_{[0,1]} \rVert_{\overline{A}} + \lVert f^* \chi_{(1, \infty)} \rVert_{\overline{B}}.
	\end{equation}
\end{definition}

\begin{remark}
	It follows from \cite[Theorem~3.1]{MusilovaNekvinda24} and \cite[Theorem~5.3]{Pesa22} that $WL\left( \overline{A}, \overline{B} \right)$ (i.e.~the amalgam of the canonical representation spaces containing functions from $\mathcal{M}([0,\infty), \lambda)$) is an r.i.~quasi-Banach function space, whence \cite[Proposition~3.3]{MusilovaNekvinda24} yields that the same is true for $WL(A,B)$ as defined above. When $A$ and $B$ are r.i.~Banach function spaces then one may obtain in a similar fashion that $WL(A,B)$ is also an r.i.~Banach function space, in the sense that there exists an r.i.~Banach function norm equivalent to $\lVert \cdot \rVert_{WL(A,B)}$. To be more specific, one obtains this result by combining \cite[Chapter~2, Theorem~4.10]{BennettSharpley88}, \cite[Corollary~3.6]{Pesa22}, and \cite[Chapter~2, Theorem~4.9]{BennettSharpley88}.
\end{remark}

\begin{remark} \label{RemAmalCanRepre}
	It is also worth noting that if follows immediately from Definition~\ref{DefAmal} that $WL\left( \overline{A}, \overline{B} \right)$ is a representation space for $WL(A,B)$. Since the representation space is in our case unique, this means that $WL\left( \overline{A}, \overline{B} \right)$ is also the canonical representation space.
\end{remark}

\begin{theorem} \label{TheoIntSumFixed}
	Let $\lVert \cdot \rVert_A$ and $\lVert \cdot \rVert_B$ be r.i.~quasi-Banach function norms over a non-atomic measure space of infinite measure. Then
	\begin{equation}
		A \cap B \hookrightarrow WL(A,B) \hookrightarrow A + B. \label{TheoIntSumFixed:E1}
	\end{equation}
	
	Moreover, if we also assume that the local component of $\lVert \cdot \rVert_A$ is stronger than that of $\lVert \cdot \rVert_B$ while the global component of $\lVert \cdot \rVert_B$ is stronger than that of $\lVert \cdot \rVert_A$ (in the sense of \cite[Theorem~5.6]{Pesa22}), we get
	\begin{align}
		A \cap B &= WL(A,B),  \label{TheoIntSumFixed:E2} \\
		A + B &= WL(B,A), \label{TheoIntSumFixed:E3}
	\end{align}
	up to equivalence of quasinorms.
\end{theorem}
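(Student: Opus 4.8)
The plan is to establish the universal embeddings \eqref{TheoIntSumFixed:E1} first and then to deduce \eqref{TheoIntSumFixed:E2} and \eqref{TheoIntSumFixed:E3} from them together with the ordering hypotheses, the quasi-triangle inequality, and boundedness of the dilation operator. The left embedding in \eqref{TheoIntSumFixed:E1} is immediate: since $f^{*}\chi_{[0,1]}\le f^{*}$ and $f^{*}\chi_{(1,\infty)}\le f^{*}$ pointwise, \textup{(P2)} and rearrangement invariance give $\lVert f^{*}\chi_{[0,1]}\rVert_{A}\le\lVert f\rVert_{A}$ and $\lVert f^{*}\chi_{(1,\infty)}\rVert_{B}\le\lVert f\rVert_{B}$, whence $\lVert f\rVert_{WL(A,B)}\lesssim\lVert f\rVert_{A\cap B}$. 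For the right embedding I would produce, for each $f$, a single decomposition $f=g+h$ realising the amalgam quasinorm exactly: putting $\s=f^{*}(1)$, split $f$ at the level $\s$ and divide the plateau $\{\abs{f}=\s\}$ between the two pieces so that the large part $g$ satisfies $\lambda(\supp g)=1$ (possible since $\lambda(\{\abs{f}\ge\s\})\ge1\ge\lambda(\{\abs{f}>\s\})$) while the small part $h$ satisfies $\abs{h}\le\s$ everywhere. A direct distribution-function computation then yields the exact identities $g^{*}=f^{*}\chi_{[0,1)}$ and $h^{*}(t)=f^{*}(t+1)=\bigl(f^{*}\chi_{(1,\infty)}\bigr)^{*}(t)$, so that $\lVert f\rVert_{A+B}\le\lVert g\rVert_{A}+\lVert h\rVert_{B}=\lVert f\rVert_{WL(A,B)}$.

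Given \eqref{TheoIntSumFixed:E1}, for \eqref{TheoIntSumFixed:E2} only the reverse embedding $WL(A,B)\hookrightarrow A\cap B$ remains. Writing $f^{*}=f^{*}\chi_{[0,1]}+f^{*}\chi_{(1,\infty)}$ and applying the quasi-triangle inequality in $A$ and in $B$ gives $\lVert f\rVert_{A}\lesssim\lVert f^{*}\chi_{[0,1]}\rVert_{A}+\lVert f^{*}\chi_{(1,\infty)}\rVert_{A}$ and $\lVert f\rVert_{B}\lesssim\lVert f^{*}\chi_{[0,1]}\rVert_{B}+\lVert f^{*}\chi_{(1,\infty)}\rVert_{B}$. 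Since $f^{*}\chi_{(1,\infty)}$ is concentrated in $(1,\infty)$, the hypothesis that the global component of $\lVert\cdot\rVert_{B}$ dominates that of $\lVert\cdot\rVert_{A}$ (understood via \cite[Theorem~5.6]{Pesa22}) yields $\lVert f^{*}\chi_{(1,\infty)}\rVert_{A}\lesssim\lVert f^{*}\chi_{(1,\infty)}\rVert_{B}$, and dually the local hypothesis yields $\lVert f^{*}\chi_{[0,1]}\rVert_{B}\lesssim\lVert f^{*}\chi_{[0,1]}\rVert_{A}$. Substituting these back, both $\lVert f\rVert_{A}$ and $\lVert f\rVert_{B}$ are bounded by $\lVert f\rVert_{WL(A,B)}$, and hence so is $\lVert f\rVert_{A\cap B}$.

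For \eqref{TheoIntSumFixed:E3}, applying the right embedding of \eqref{TheoIntSumFixed:E1} with the roles of $A$ and $B$ interchanged already gives $WL(B,A)\hookrightarrow A+B$, so the work lies in the reverse embedding. Fix an arbitrary splitting $f=u+v$ and use $f^{*}(t)\le u^{*}(t/2)+v^{*}(t/2)$ to dominate each of $f^{*}\chi_{[0,1]}$ and $f^{*}\chi_{(1,\infty)}$, then split by the quasi-triangle inequality. For the local piece $\lVert f^{*}\chi_{[0,1]}\rVert_{B}$ I would route the $u$-term through the local ordering and then the dilation operator, $\lVert u^{*}(\cdot/2)\chi_{[0,1]}\rVert_{B}\lesssim\lVert u^{*}(\cdot/2)\chi_{[0,1]}\rVert_{A}\lesssim\lVert u\rVert_{A}$, while keeping the $v$-term in $B$, $\lVert v^{*}(\cdot/2)\chi_{[0,1]}\rVert_{B}\lesssim\lVert v\rVert_{B}$; for the global piece $\lVert f^{*}\chi_{(1,\infty)}\rVert_{A}$, symmetrically, the $v$-term is moved into $A$ by the global ordering and the $u$-term stays in $A$. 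Adding the two estimates and taking the infimum over all splittings $f=u+v$ gives $\lVert f\rVert_{WL(B,A)}\lesssim\lVert f\rVert_{A+B}$.

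I expect the one genuinely delicate step to be the explicit decomposition behind the right embedding in \eqref{TheoIntSumFixed:E1}: one must treat the case in which $\abs{f}$ has a plateau of positive measure exactly at the critical height $f^{*}(1)$ and check that, after the chosen partition of that plateau, the two rearrangements come out \emph{equal} to $f^{*}\chi_{[0,1)}$ and $\bigl(f^{*}\chi_{(1,\infty)}\bigr)^{*}$, not merely comparable to them — this sharpness is precisely what promotes the original inclusions to embeddings. A secondary point, needed only for \eqref{TheoIntSumFixed:E3}, is that boundedness of the dilation operator has to be invoked in the quasi-Banach rather than the Banach setting, for which I would cite \cite{NekvindaPesa24} or \cite{Pesa22}.
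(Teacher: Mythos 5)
Your argument is correct in substance, and it reaches the conclusion by a route that differs from the paper's in the two places where real work is done. For the embedding $WL(A,B)\hookrightarrow A+B$, the paper (following the proof of Theorem~3.11 in \cite{Pesa22}) cuts $f$ at the height $f^*(1)$ and is then left with the extra term $\lVert f^*(1)\chi_{[0,1]}\rVert_B$, which in the quasi-Banach setting it controls by the ratio trick $\lVert f^*(1)\chi_{[0,1]}\rVert_B\le\lVert\chi_{[0,1]}\rVert_B\lVert\chi_{[0,1]}\rVert_A^{-1}\lVert f^*\chi_{[0,1]}\rVert_A$ (this is exactly the replacement for the old use of \textup{(P5)}); your exact plateau-splitting decomposition, with $g^*=f^*\chi_{[0,1)}$ and $h^*=(f^*\chi_{(1,\infty)})^*$, avoids that extra term altogether and even gives the inequality with constant $1$, which is cleaner. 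The one caveat is that carving out a set of measure exactly $1$ requires nonatomicity; over the half-line this is fine, but the amalgams are also defined (via the Luxemburg representation) over completely atomic resonant spaces with atoms of equal measure, where the exact split may be impossible and one has to fall back on something like the paper's $\chi_{[0,1]}$ trick, so your construction should be flagged as covering the nonatomic case and patched for the atomic one. For the genuinely new part, $A+B\hookrightarrow WL(B,A)$, the paper argues at a higher level: from the ordering hypotheses and \cite[Theorem~5.6, Remark~5.2]{Pesa22} it gets $A=WL(A,A)\hookrightarrow WL(B,A)$ and $B=WL(B,B)\hookrightarrow WL(B,A)$, then applies the quasi-triangle inequality of $\lVert\cdot\rVert_{WL(B,A)}$ to an arbitrary decomposition $f=f_A+f_B$ and takes the infimum, noting the constants are uniform; your version unpacks precisely these ingredients, replacing the cited quasi-triangle inequality for the amalgam by the elementary estimate $(u+v)^*(t)\le u^*(t/2)+v^*(t/2)$ together with boundedness of the dilation operator in the quasi-Banach setting (which is indeed available in the literature you indicate), and then ends with the same infimum argument. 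Your treatments of \eqref{TheoIntSumFixed:E2} and of the easy embedding in \eqref{TheoIntSumFixed:E1} coincide with the paper's. The only other difference is that the paper also records, in passing, that $A\cap B$ and $A+B$ satisfy the relevant function-space axioms (this is what feeds Corollary~\ref{CorSum_qBFS}); you do not address this, but it is not needed for the quasinorm inequalities and equivalences asserted in the theorem itself.
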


A significantly weaker version of Theorem~\ref{TheoIntSumFixed} has been obtained in \cite[Theorem~3.11 and Corollary~3.12]{Pesa22}. Our contribution is that \eqref{TheoIntSumFixed:E3} is proved with equivalent quasinorms instead of just as sets, while at the same time we work in the much wider setting of r.i.~quasi-Banach function spaces over non-atomic measure spaces of infinite measure, while the setting of the original result were r.i.~Banach function spaces over $([0,\infty), \lambda)$.

Let us also note that in the original setting of r.i.~Banach function spaces over $([0,\infty), \lambda)$, the above-mentioned equivalence of quasinorms in \eqref{TheoIntSumFixed:E3} can be obtained from \cite[Corollary~3.12]{Pesa22} by combining \cite[Corollary~3.10]{NekvindaPesa24} with the fact that the sum is in this case a Banach function space (which follows from \cite{Lozanovskii78} or \cite[Lemma~1.12]{CwikelNilsson03}, see the proof of Theorem~\ref{ThmSumIntRIBFS} for details). An alternative approach using only the tools already developed in \cite{Pesa22} has been presented in \cite[Note~(II'), Proof of Corollary~3.2]{Pesa24Diss}. However, neither of these approaches works in the more general setting of r.i.~quasi-Banach function spaces, as both of them depends in some way on the equality $X = X''$, up to equivalence of quasinorms, which holds if and only if $X$ is a Banach function space (see \cite[Theorem~3.1]{GogatishviliSoudsky14}). Thence, a different approach was required, which we now present.

\begin{proof}[Proof of Theorem~\ref{TheoIntSumFixed}]
	We first assume that the underlying measure space is the interval $[0, \infty)$ equipped with the standard Lebesgue measure (denoted $\lambda$) and start with \eqref{TheoIntSumFixed:E1} (the approach is quite similar to that in \cite[Proof of Theorem~3.11]{Pesa22}):
	
	Fix some $f \in \mathcal{M}([0,\infty), \lambda)$. Then
	\begin{equation*}
		\lVert f \rVert_{WL(A, B)} = \lVert f^* \chi_{[0,1]} \rVert_A + \lVert f^* \chi_{(1, \infty)} \rVert_B \leq \lVert f \rVert_A + \lVert f \rVert_B \leq 2 \lVert f \rVert_{A \cap B}
	\end{equation*}
	which establishes the first embedding. 
	
	As for the second embedding, we may assume without loss of generality that $f$ is non-negative (thanks to Lemma~\ref{LemmIntSumBasic}). Consider now functions $f_0$ and $f_{\infty}$ defined by
	\begin{align*}
		f_0 &= \max \{ f - f^*(1), \, 0\}, \\
		f_{\infty} &= \min \{ f, \, f^*(1)\}.
	\end{align*}
	Then $f = f_0 + f_{\infty}$ and thus
	\begin{equation*}
		\begin{split}
			\lVert f \rVert_{A+B} &\leq \lVert f_0 \rVert_A + \lVert f_{\infty} \rVert_B = \lVert f_0^* \rVert_A + \lVert f_{\infty}^* \rVert_B.
		\end{split} 
	\end{equation*}
	Here, we use first the definition of $\lVert \cdot \rVert_{A+B}$ (if $f_0 \notin A$ or $f_{\infty} \notin B$ then the estimate holds trivially) and then invariance of both $\lVert \cdot \rVert_A$ and $\lVert \cdot \rVert_B$. Furthermore, thanks to $f$ being non-negative, it is an exercise to verify that
	\begin{align*}
		f_0^* &= \max\{f^* - f^*(1), \, 0\} = (f^* - f^*(1)) \chi_{[0,1]}, \\
		f_{\infty}^* &= \min\{f^*, \, f^*(1)\} = f^*(1) \chi_{[0,1]} + f^* \chi_{(1, \infty)},
	\end{align*}
	and therefore
	\begin{equation*}
		\begin{split}
			\lVert f \rVert_{A+B} &\lesssim \lVert f^* \chi_{[0,1]} \rVert_A + \lVert f^*(1) \chi_{[0,1]} \rVert_B + \lVert f^* \chi_{(1, \infty)} \rVert_B \\
			&\leq \lVert f \rVert_{WL(A, B)} + \lVert \chi_{[0,1]} \rVert_B \lVert \chi_{[0,1]} \rVert_A^{-1} \lVert f^*(1) \chi_{[0,1]} \rVert_A \\
			&\leq (1 + \lVert \chi_{[0,1]} \rVert_B \lVert \chi_{[0,1]} \rVert_A^{-1}) \lVert f \rVert_{WL(A, B)}.
		\end{split} 
	\end{equation*}
	This establishes the second embedding.
	
	As for the validity of \eqref{TheoIntSumFixed:E2} under the appropriate assumptions, \cite[Theorem~5.6 and Remark~5.2]{Pesa22} together with said assumptions imply	
	\begin{align*}
		WL(A,B) &\hookrightarrow A, \\
		WL(A,B) &\hookrightarrow B,
	\end{align*}
	which is clearly sufficient for the remaining embedding.
	
		It remains to show that \eqref{TheoIntSumFixed:E3} holds with equivalent quasinorms (under the appropriate assumptions). Considering \eqref{TheoIntSumFixed:E1}, we only have to show that $A + B \hookrightarrow WL(B,A)$. Similarly to \cite[Proof of Corollary~3.12]{Pesa22}, it follows from our assumptions via \cite[Theorem~5.6 and Remark~5.2]{Pesa22} that
	\begin{equation} \label{TheoIntSumFixed:E4}
		\begin{split}
			A = WL(A,A) &\hookrightarrow WL(B, A),  \\
			B = WL(B,B) &\hookrightarrow WL(B, A), 
		\end{split}
	\end{equation}
	where the equalities hold up to equivalence of quasinorms. Consider thus arbitrary $f \in A + B$ and any pair of functions $f_A \in A$, $f_B \in B$ such that $f = f_A + f_B$. Then
	\begin{equation*}
		\lVert f \rVert_{WL(B,A)} \lesssim  \lVert f_A \rVert_{WL(B,A)} + \lVert f_B \rVert_{WL(B,A)} \lesssim \lVert f_A \rVert_A + \lVert f_B \rVert_B.
	\end{equation*}
	Since the constants in this estimate depend only on the modulus of concavity of $\lVert \cdot \rVert_{WL(A,B)}$ and the constants from the embeddings in \eqref{TheoIntSumFixed:E4}, we may take the infimum over all such decompositions to obtain the desired estimate
	\begin{equation*}
		\lVert f \rVert_{WL(A,B)} \lesssim 	\lVert f \rVert_{A+B}.
	\end{equation*}
	
	Finally, we consider the case when the underlying measure space is non-atomic and of infinite measure. We observe that $\overline{WL(A,B)} = WL\left( \overline{A}, \overline{B} \right)$ (Remark~\ref{RemAmalCanRepre}) and $\overline{A \cap B} = \overline{A} \cap \overline{B}$ (Proposition~\ref{PropIntRIqBFS}), in both cases with equal quasinorms. We may therefore apply the special case proven above to compute for arbitrary $f \in \mathcal{M}(\mathcal{R, \mu})$ that
	\begin{equation*}
		\lVert f \rVert_{A \cap B} = \lVert f^* \rVert_{\overline{A} \cap \overline{B}} \approx \lVert f^* \rVert_{WL \left( \overline{A}, \overline{B} \right)} = \lVert f \rVert_{WL(A,B)}.
	\end{equation*}
	As for the sum, we again observe that $\overline{WL(B,A)} = WL(\overline{B}, \overline{A})$ and so we may employ Theorem~\ref{ThmSumRIqBFS} to compute for arbitrary $f \in \mathcal{M}(\mathcal{R, \mu})$ that
	\begin{equation*}
		\lVert f \rVert_{A + B} \approx \lVert f^* \rVert_{\overline{A} + \overline{B}} \approx \lVert f^* \rVert_{WL \left( \overline{B}, \overline{A} \right)} = \lVert f \rVert_{WL(B,A)}.
	\end{equation*}
\end{proof}

Since $WL(A,B)$ is always an r.i.~quasi-Banach function space, we immediately obtain the promised partial solution to the question whether $A+B$ has the Fatou property \ref{P3}. 

\begin{corollary} \label{CorSum_qBFS}
	Let $\lVert \cdot \rVert_A$ and $\lVert \cdot \rVert_B$ be r.i.~quasi-Banach function norms (over a non-atomic measure space of infinite measure) such that the local component of $\lVert \cdot \rVert_A$ is stronger than that of $\lVert \cdot \rVert_B$ while the global component of $\lVert \cdot \rVert_B$ is stronger than that of $\lVert \cdot \rVert_A$ (in the sense of \cite[Theorem~5.6]{Pesa22}). Then $A+B$ is an r.i.~quasi-Banach function space, meaning that there exists an r.i.~quasi-Banach function norm that is equivalent to $\lVert \cdot \rVert_{A+B}$.
\end{corollary}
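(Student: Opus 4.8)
The plan is to deduce Corollary~\ref{CorSum_qBFS} directly from \eqref{TheoIntSumFixed:E3}, exploiting the fact that a Wiener--Luxemburg amalgam of r.i.~quasi-Banach function spaces is again an r.i.~quasi-Banach function space. Under the hypotheses of the corollary --- which are exactly the hypotheses under which \eqref{TheoIntSumFixed:E3} was established --- Theorem~\ref{TheoIntSumFixed} gives $A + B = WL(B,A)$ up to equivalence of quasinorms, so there are constants $c_1, c_2 \in (0,\infty)$ with $c_1 \lVert f \rVert_{A+B} \le \lVert f \rVert_{WL(B,A)} \le c_2 \lVert f \rVert_{A+B}$ for every $f \in M$. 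The claim will then be that $\lVert \cdot \rVert_{WL(B,A)}$ is an r.i.~quasi-Banach function norm equivalent to $\lVert \cdot \rVert_{A+B}$, which is precisely the functional whose existence the corollary asserts.

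First I would record the only ingredient that is not entirely formal, namely that $\lVert \cdot \rVert_{WL(B,A)}$ really is an r.i.~quasi-Banach function norm: it is rearrangement-invariant by construction and satisfies the quasi-triangle inequality together with the axioms \textup{(P1)}--\textup{(P4)}. This is not something to be proved here but is part of the basic theory of Wiener--Luxemburg amalgams --- in the Banach setting it is contained in \cite{Pesa22}, and in the quasi-Banach generality needed here it follows from \cite{MusilovaNekvinda24} and \cite{NekvindaPesa24}; indeed, the very definition of $WL(\cdot,\cdot)$ recalled in the Introduction presupposes that $WL(C,D)$ is an r.i.~quasi-Banach function space whenever $\lVert \cdot \rVert_C$ and $\lVert \cdot \rVert_D$ are r.i.~quasi-Banach function norms. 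Applying this with $C = B$ and $D = A$ supplies the required functional, and its equivalence with $\lVert \cdot \rVert_{A+B}$ has already been noted. Thus the proof is, in effect, two lines.

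There is no genuine obstacle; the corollary is stated separately only because of what it says about $\lVert \cdot \rVert_{A+B}$ itself. Recall that the item of the proof of Theorem~\ref{TheoIntSumFixed} concerning $A+B$ (see~\ref{TheoIntSumFixed_ii}) verifies by hand that $\lVert \cdot \rVert_{A+B}$ satisfies \textup{(P2)} and \textup{(P4)} and the identity $\lVert f \rVert_{A+B} = \lVert \, \lvert f \rvert \, \rVert_{A+B}$, but does not address the Fatou property \textup{(P3)}, so that discussion alone does not show $\lVert \cdot \rVert_{A+B}$ to be an r.i.~quasi-Banach function norm. The point is that under the ordering hypotheses one need not settle that question: \eqref{TheoIntSumFixed:E3} produces an equivalent functional, $\lVert \cdot \rVert_{WL(B,A)}$, which does satisfy all of \textup{(P1)}--\textup{(P4)}, so $A + B$ is an r.i.~quasi-Banach function space regardless. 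If anything, the only care needed is to ensure the cited structural result on amalgams is quoted in full quasi-Banach generality rather than merely in the Banach setting of \cite{Pesa22}, which is why I would lean on \cite{MusilovaNekvinda24} and \cite{NekvindaPesa24} at that step.
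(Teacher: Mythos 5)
Your argument is correct and is essentially the paper's own: the corollary is treated as an immediate consequence of \eqref{TheoIntSumFixed:E3}, the point being exactly that $\lVert \cdot \rVert_{WL(B,A)}$ is a functional known a priori to be an r.i.~quasi-Banach function norm and is equivalent to $\lVert \cdot \rVert_{A+B}$, which sidesteps the unresolved Fatou property \textup{(P3)} for $\lVert \cdot \rVert_{A+B}$ itself. Only a minor attribution point: the fact that $WL(B,A)$ is an r.i.~quasi-Banach function space is already available in the quasi-Banach generality in \cite{Pesa22} (the amalgam theory there is developed for r.i.~quasi-Banach function norms), so no extra appeal beyond that is needed for this step.
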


%This result is interesting for the following reason: While it follows from the results contained in \cite{Lozanovskii78} or \cite[Lemma~1.12]{CwikelNilsson03} that the sum of Banach function spaces is also a Banach function space, these results do not translate to the quasinormed setting (as we explained above), and so the question whether the sum of quasi-Banach function spaces is also a quasi-Banach function space is, to the best of our knowledge, open. Corollary~\ref{CorSum_qBFS} thus provides a positive answer for some special cases.

%The problematic part in proving a version of Corollary~\ref{CorSum_qBFS} that would hold for all quasi-Banach function spaces is, of course, the Fatou property \ref{P3}; the other axioms hold as has been observed in the proof of Theorem~\ref{TheoIntSumFixed}. Indeed, even in the Banach function space setting, there seems to be no direct proof of \ref{P3} available in the literature; the existing proofs all use an indirect approach that uses the equality $X = X''$ in some way. Our partial solution is similar in this regard: while we do not need equality $X = X''$, we still proceed indirectly, by comparing $\lVert \cdot \rVert_{A+B}$ with some functional about which we know a~priori that it is an r.i.~quasi-Banach function norm.

\bibliographystyle{dabbrv}
\bibliography{bibliography}
\end{document}